\newtheorem{te}{Theorem}[section]
\newtheorem{os}[te]{Remark}
\newtheorem{prop}[te]{Proposition}
\numberwithin{equation}{section}
\begin{document}

    \title{State-dependent Fractional Point Processes}

    \author{Roberto Garra$^1$}
    \address{${}^1$Dipartimento di Scienze di Base e Applicate per l'Ingegneria, Sapienza Universit\`a di Roma.}

    \author{Enzo Orsingher$^2$}
    \address{${}^2$Dipartimento di Statistica, Sapienza Universit\`a di Roma.}

    \author{Federico Polito$^3$}
    \address{${}^3$Dipartimento di Matematica "G. Peano'', Universit\`a degli Studi di Torino.}

    \keywords{Dzhrbashyan--Caputo fractional derivative, Poisson processes, Stable
        processes, Mittag--Leffler functions, Pure birth process}

    \date{\today}

    \begin{abstract}
        The aim of this paper is the analysis of the fractional Poisson process where the state probabilities
        $p_k^{\nu_k}(t)$, $t\ge 0$, are
        governed by time-fractional equations of order $0<\nu_k\leq 1$ depending on the number $k$ of events
        occurred up to time $t$.
        We are able to obtain explicitely the Laplace transform of $p_k^{\nu_k}(t)$ and various representations
        of state probabilities.
        We show that the Poisson process with intermediate waiting times depending on $\nu_k$ differs from
        that constructed from the
        fractional state equations (in the case $\nu_k = \nu$, for all $k$, they coincide with the time-fractional
        Poisson process).
        We also introduce a different form of fractional state-dependent Poisson process as a weighted sum of
        homogeneous Poisson processes.
        Finally we consider the fractional birth process governed by equations with state-dependent fractionality.
    \end{abstract}

    \maketitle

    \section{Introduction}

        We first consider a state-dependent time-fractional Poisson process $N(t)$, $t\geq 0$, whose state
        probabilities $p_k^{\nu_k}(t)=\Pr\{N(t)=k\}$
        are governed by the following equations
        \begin{align}
            \label{aao}
            \begin{cases}
                \frac{\mathrm{d}^{\nu_k}}{\mathrm{d}t^{\nu_k}} p_k^{\nu_k}(t)
                = -\lambda p_k^{\nu_k}(t) +\lambda p_{k-1}^{\nu_{k-1}}(t), & k \ge 0, \: t > 0,
                \: \nu_k \in (0,1], \: \lambda > 0, \\
                p_k^{\nu_k}(0) =
                \begin{cases}
                    1, \quad k=0, \\
                    0, \quad k \geq 1,
                \end{cases}
            \end{cases}
        \end{align}
        where $p_k^{\nu_k}(t) = 0$, if $k \in \mathbb{Z}^- \backslash \{0\}$. These equations are obtained
        by replacing, in the governing equations of the homogeneous Poisson process, the ordinary
        derivative with the Dzhrbashyan--Caputo fractional derivative
        that is \citep{Pod}
        \begin{equation}\label{Capu}
            \frac{\mathrm d^{\nu}}{\mathrm dt^{\nu}}f(t)=
            \begin{cases}
                \frac{1}{\Gamma(m-\nu)}\int_0^{t}(t-s)^{m-\nu-1}f^{(m)} (s) \, \mathrm ds, & m-1<\nu<m , \\
                \frac{d^m f}{dt^m}, & \nu = m.
            \end{cases}
        \end{equation}
        We remark that in \eqref{aao}, the order of the fractional derivatives depend on the number of events occurred up
        to time $t$. By definition we have that
        \begin{equation}
            \frac{\mathrm d^{\nu_k}}{\mathrm dt^{\nu_k}}p_k^{\nu_k}(t)=
            \frac{1}{\Gamma(1-\nu_k)}\int_0^{t}(t-s)^{-\nu_k}\frac{\mathrm d}{\mathrm ds}p_k^{\nu_k} (s)
            \, \mathrm ds, \qquad 0<\nu_k<1.
        \end{equation}
        Hence the dependence of $p_k^{\nu_k}(t)$ on the
        past is twofold. On one side, the fractional derivative
        depends on the whole time span $[0,t]$ through the weight
        function. On the other side the number of events occurred up
        to the time $t$ modifies the power of the weight function.
        This means that the memory effect can play an increasing or
        decreasing role, in the case of a monotonical structure of
        the sequence of fractional orders $\nu_k$.

        For example, if $\nu_k$ decreases with $k$, the
        memory function tends to be constant and to give the
        same weight to the whole time span $[0,t]$. We notice that
        state-depending fractionality was considered in different
        contexts by \citet{Fedotov}.

        For $\nu_k= \nu$, for all $k$, the system \eqref{aao} coincides with the one
        governing the classical fractional Poisson process considered for example by
        \citet{Beghin}, where the fractional derivative is
        meant in the Dzhrbashyan--Caputo sense as in this case. Of course, if $\nu_k
        = 1$, for all $k$, we retrieve the governing equation for the
        homogeneous Poisson process.
        Some papers devoted to various forms of fractional Poisson
        processes have appeared in the last decades.
        In \citet{hil} the authors introduced for the first time the Mittag-Leffler waiting-time
        density in the theory of continuous-time random walks. The time-fractional Poisson process was
    then explicitly considered by \citet{rep}. Starting from this paper, different approaches to
    fractional Poisson processes were considered. In \citet{viet}, for example, the authors considered
        renewal processes with Mittag-Leffler distributed intertimes.
    A slightly different approach to the fractional Poisson process
        was developed in \citet{Laskin}, where the fractional derivative
        appearing in the equations governing the state probabilities coincides with the Riemann--Liouville derivative.
    More recently \citet{Beghin} and \citet{vel} studied the subordination of the Poisson process to the inverse
    stable subordinator, discussing the relation with fractional Poisson processes.
    Another type of fractional Poisson process was developed in
    \citet{fed} where a space-fractionality is considered.
        Physical applications of the fractional
    Poisson processes are discussed, for example,
        in \citet{Laskin1}, where a new family of quantum coherent states has been studied.

        By solving equation \eqref{aao}, we obtain that
        \begin{equation}
            \label{intr}
            \int_0^{+\infty}e^{-st}p_k^{\nu_k}(t) \mathrm dt
            = \frac{\lambda^k s^{\nu_0-1}}{\prod_{j=0}^k
            (s^{\nu_j}+\lambda)}, \qquad s > 0.
        \end{equation}
        The inversion of \eqref{intr} is by no means a simple matter and we have been able to obtain an explicit result
        for $p_0^{\nu_0}$ and $p_1^{\nu_1}$ in terms of generalized Mittag--Leffler functions defined as
        (see for example \citet{Saxena})
        \begin{equation}
            E_{\nu,\beta}^m(x)= \sum_{k=0}^{\infty}\frac{x^k\Gamma(m+k)}{k!\Gamma(\nu
            k+\beta)\Gamma(m)}, \qquad \nu, \beta, m \in\mathbb{R}^+, \: x\in \mathbb{R}.
        \end{equation}
        We give also the distribution $p_k^{\nu_k}(t)$ of the Poisson process with fractionality $\nu_k$ depending on the
        number of events $k$, in terms of subordinators and their inverses (see formula \eqref{casai1}
        below).

        A part of our paper is devoted to the construction of a
        point process $\mathcal{N}(t)$, $t\geq 0$, with intertime $U_k$
        between the $k$th and $(k+1)$th event
        distributed  as
        \begin{equation}
            \Pr\{U_k> t\}= E_{\nu_k,1}(-\lambda t^{\nu_k}).
        \end{equation}
        The Laplace transform of the univariate distributions of
        $\mathcal{N}(t)$, $t\geq 0$, is
        \begin{equation}
            \int_0^\infty e^{-s t} \Pr \{ \mathcal{N}(t) = k \} \mathrm{d}t= \lambda^k
            \frac{s^{\nu_k-1}}{\prod_{j=0}^k (s^{\nu_j}+\lambda)},
        \end{equation}
        which slightly differs from \eqref{intr}. From this point of view the state-dependent
        fractional Poisson process differs from the time-fractional Poisson process because
        the approach based on the construction by means of independent inter-event times $U_k$ and
        the one based on fractional equations \eqref{aao}, do not lead to the same one-dimensional
        distribution.
        We show that the probabilities
        $p_k(t)=\Pr \{ \mathcal{N}(t) = k \}$ are
        solutions to the fractional integral equations
        \begin{equation}
            p_k(t)-p_k(0)= -\lambda I^{\nu_k} p_k(t)+ \lambda I^{\nu_{k-1}}
            p_{k-1}(t),
        \end{equation}
        where $I^{\nu_k}$ is the Riemann--Liouville fractional integral
        \begin{equation}\label{riml}
            \left(I^{\nu_k}f\right)(t) = \frac{1}{\Gamma(\nu_k)}\int_0^{t}(t-s)^{\nu_k-1}f(s)
            \mathrm ds, \qquad \nu_k > 0.
        \end{equation}
        A third definition of the state-dependent fractional Poisson process, say $\widehat{N}(t)$, with distribution
        \begin{equation}
            \label{palombaro}
            \Pr\{\widehat{N}(t)= j\}=\frac{\frac{(\lambda t)^j}{\Gamma(\nu_j j+1)}
            \frac{1}{E_{\nu_j,1}(\lambda t)}}{\sum_{j=0}^{+\infty}
            \frac{(\lambda t)^j}{\Gamma(\nu_j j+1)}    \frac{1}{E_{\nu_j,1}(\lambda t)}}, \quad j \geq 0,
        \end{equation}
        is introduced and analyzed in Section \ref{terzo}.
        The distribution
        \begin{equation}
            \label{palomar}
            \Pr\{\widehat{N}_{\nu}(t)= j\}=\frac{(\lambda t)^k}{\Gamma(\nu j+1)}\frac{1}{E_{\nu_j,1}(\lambda t)},
        \end{equation}
        investigated in \citet{Beghin}, has been proved to be a weighted sum of Poisson distributions
        in \citet{Bala} and \citet{Macci}.

        Finally, we analyze the state-dependent nonlinear pure birth process with one initial progenitor,
        where the state probabilities
        $p_k^{\nu_k}(t)$ satisfy the fractional equations
        \begin{align}\label{intr1}
            \begin{cases}
                \frac{\mathrm{d}^{\nu_k}}{\mathrm{d}t^{\nu_k}} p_k^{\nu_k}(t)
                = -\lambda_k p_k^{\nu_k}(t) +\lambda_{k-1} p_{k-1}^{\nu_{k-1}}(t),
                & k \ge 1, \: t > 0, \: \nu_k \in (0,1], \\
                p_k^{\nu_k}(0) =
                \begin{cases}
                    1, \quad k = 1, \\
                    0, \quad k \geq 2.
                \end{cases}
            \end{cases}
        \end{align}
        The Laplace transform of the solution to \eqref{intr1} reads
        \begin{equation}
            \int_0^{+\infty}e^{-st}p_k^{\nu_k}(t) \mathrm dt
            =\left(\prod_{j=1}^{k-1}\lambda_j\right) \frac{s^{\nu_1-1}}{\prod_{j=1}^k
            (s^{\nu_j}+\lambda_j)}.
        \end{equation}
        A similar and more general state-dependent fractional
        birth-death process was recently tackled by \citet{Fedotov},
        where possible applications to chemotaxis are sketched.\\
        The case where $\nu_k = \nu$, for all $k$ in \eqref{intr1}, has been dealt with in \citet{fede}.
        An attempt to apply this fractional birth process was discussed in \citet{phys}
        in relation to tumoral growth models and ETAS (\textit{Epidemic Type Aftershock Sequences})
        model in statistical seismology.\\
        The dependence of the state probabilities of the point
        processes considered here from the structure of $\nu_k$, requires a further investigation
        which certainly implies a numerical approach.

        \subsection{Notation}

            For the sake of clarity we briefly summarize the notation used for the different
            point processes analyzed in the following sections.

            First we indicate with $N(t)$, $t \ge 0$, the counting process associated with the
            variable-order difference-differential equations \eqref{aao}. In particular,
            the state probabilities $p_k^{\nu_k}(t) = \Pr \{ N(t) = k \}$, $k \ge 0$, represent the
            probability of being in state $k$ at a fixed time $t \ge 0$.
            The point process constructed and studied in Section \ref{terzo}
            by means of independent but non i.d.\ inter-arrival times
            is instead indicated by a calligraphic $\mathcal{N}(t)$.
            Both processes, when $\nu_k=\nu$ for all $k \ge 0$, reduce to the
            time-fractional Poisson process $N_\nu(t)$ treated for example in
            \citet{Beghin}. In the same article it is also considered the alternative
            definition for a fractional Poisson process characterized by the distribution
            \eqref{palomar} and denoted here by $\hat{N}_\nu(t)$, $t \ge 0$.
            We refer to its direct generalization in a state-dependent sense
            as $\hat{N}(t)$ for which the distribution becomes that in \eqref{palombaro}.
            Lastly, the linear fractional pure birth process with state-dependent
            order of fractionality presented in the last section
            is simply indicated as $N_{lin}(t)$, $t \ge 0$.

    \section{The state-dependent fractional Poisson process}

        \label{due}
        We first consider a state-dependent time-fractional Poisson process
        $N(t)$, $t\geq 0$, whose state probabilities $p_k^{\nu_k}(t)=\Pr\{N(t)=k\}$
        are governed by  equations \eqref{aao}.
        We have the following result

        \begin{te}
            The Laplace transform of the solution to the state-dependent
            time-fractional equations
            \begin{align}
                \label{aa}
                \begin{cases}
                    \frac{\mathrm{d}^{\nu_k}}{\mathrm{d}t^{\nu_k}} p_k^{\nu_k}(t)
                    = -\lambda p_k^{\nu_k}(t) +\lambda p_{k-1}^{\nu_{k-1}}(t), & k \ge 0, \: t > 0, \: \nu_k \in (0,1], \\
                    p_k^{\nu_k}(0) =
                    \begin{cases}
                        1, \quad k=0, \\
                        0, \quad k \geq 1,
                    \end{cases}
                \end{cases}
            \end{align}
            reads
            \begin{align}
                \tilde{p}_k^{\nu_k} (s) = \int_0^{+\infty}e^{-st}p_k^{\nu_k}(t) \mathrm dt
                = \frac{\lambda^k s^{\nu_0-1}}{\prod_{j=0}^k
                (s^{\nu_j}+\lambda)},
            \end{align}
            where the fractional derivative appearing in \eqref{aa} is
            in the sense of Dzhrbashyan--Caputo.

            \begin{proof}
                We can solve equation \eqref{aa} by means of an iterative
                procedure, as follows.
                The equation related to $k=0$
                \begin{align}
                    \label{aa0}
                    \begin{cases}
                        \frac{\mathrm{d}^{\nu_0}}{\mathrm{d}t^{\nu_0}} p_0^{\nu_0}(t)
                        = -\lambda p_0^{\nu_0}(t), & \: t > 0, \: \nu_0 \in (0,1], \\
                        p_0^{\nu_0}(0) = 1,
                    \end{cases}
                \end{align}
                has solution
                $p_0^{\nu_0}(t)= E_{\nu_0,1}(-\lambda t^{\nu_0}),$
                with Laplace transform
                \begin{equation}
                    \tilde{p}_0^{\nu_0} (s)=\int_0^{+\infty}e^{-st}p_0^{\nu_0}(t) \mathrm dt =
                    \frac{s^{\nu_0-1}}{\lambda+s^{\nu_0}},
                \end{equation}
                where
                \begin{align}
                    E_{\nu_0,1}(-\lambda t^{\nu_0})= \sum_{k=0}^{\infty}
                    \frac{(-\lambda t^{\nu_0})^k}{\Gamma(\nu_0 k+1)},
                \end{align}
                is the Mittag--Leffler function.

                For $k=1$, the equation
                \begin{align}
                    \label{aa1}
                    \begin{cases}
                        \frac{\mathrm{d}^{\nu_1}}{\mathrm{d}t^{\nu_1}} p_1^{\nu_1}(t)
                        = -\lambda p_1^{\nu_1}(t) + \lambda p_0^{\nu_0}(t),
                        & \: t > 0, \: \nu_1 \in (0,1], \\
                        p_1^{\nu_1}(0) = 0,
                    \end{cases}
                \end{align}
                has solution with Laplace transform
                \begin{equation}\label{ogg}
                    \tilde{p}_1^{\nu_1} (s)=\int_0^{+\infty}e^{-st}p_1^{\nu_1}(t) \mathrm dt =
                    \frac{\lambda s^{\nu_0-1}}{\lambda+s^{\nu_0}}\frac{1}{\lambda+s^{\nu_1}}.
                \end{equation}
                By iterating this procedure, we arrive at
                \begin{align}
                    \label{casa}
                    \tilde{p}_k^{\nu_k} (s) = \int_0^{+\infty}e^{-st}p_k^{\nu_k}(t) \mathrm dt
                    = \frac{\lambda^k s^{\nu_0-1}}{\prod_{j=0}^k (s^{\nu_j}+\lambda)}.
                \end{align}
            \end{proof}
        \end{te}

        \begin{os}
            A direct approach based on the inversion of the Laplace
            transform of \eqref{casa} is clumsy and cumbersome. We give the
            explicit evaluation of $p_1^{\nu_1} (t)$.
            In this case, from \eqref{ogg}, we have that
            \begin{align}
                \label{conto}
                \tilde{p}_1^{\nu_1} (s) & =\int_0^{+\infty}e^{-st}p_1^{\nu_1}(t) \mathrm dt \\
                & = \frac{\lambda s^{\nu_0-1}}{\lambda^2+\lambda
                (s^{\nu_0}+s^{\nu_1})+s^{\nu_0+\nu_1}} \notag \\
                \nonumber &=\frac{\lambda s^{\nu_0-1}}{\lambda^2+s^{\nu_0+\nu_1}}\frac{1}{1+\frac{\lambda
                (s^{\nu_0}+s^{\nu_1})}{\lambda^2+s^{\nu_0+\nu_1}}}\\
                \nonumber &=\lambda s^{\nu_0-1}\sum_{m=0}^{\infty}\frac{(-\lambda
                (s^{\nu_0}+s^{\nu_1}))^m}{(\lambda^2+s^{\nu_0+\nu_1})^{m+1}}\\
                \nonumber &=\lambda
                s^{\nu_0-1}\sum_{m=0}^{\infty}
                \frac{(-\lambda)^m}{(\lambda^2+s^{\nu_0+\nu_1})^{m+1}}\sum_{r=0}^{m}\binom{m}{r}
                s^{\nu_0 r+\nu_1(m-r)}.
             \end{align}

            The inversion of \eqref{conto} involves the generalized
            Mittag--Leffler function, defined as (see, for example, \citet{Saxena})
            \begin{equation}
                E_{\nu,\beta}^m(-\lambda t^{\nu})= \sum_{k=0}^{\infty}\frac{(-\lambda t^{\nu})^k\Gamma(m+k)}{k!\Gamma(\nu
                k+\beta)\Gamma(m)},
            \end{equation}
            where $\nu, \beta, m\in\mathbb{R}^+$.

            Indeed, we recall the following relation.
            \begin{equation}
                \label{rico}
                \int_0^{+\infty}e^{-st}t^{\beta-1}E_{\nu,\beta}^m(-\lambda
                t^{\nu}) \mathrm dt = \frac{s^{\nu m-\beta}}{(\lambda+s^{\nu})^m}.
            \end{equation}
            In view of \eqref{conto} and \eqref{rico}, we arrive at
            \begin{equation}\label{sax}
                p_1^{\nu_1} (t)=\sum_{m=0}^{\infty}(-1)^m \lambda^{m+1}\sum_{r=0}^{m}\binom{m}{r}
                t^{\nu_0(m-r)+\nu_1 r+\nu_1}E_{\nu_0+\nu_1,\nu_0(m-r)+\nu_1
                r+\nu_1+1}^{m+1}(-\lambda^2
                t^{\nu_0+\nu_1}).
            \end{equation}
            For the case $\nu_1=\nu_0=\nu$, formula \eqref{sax} becomes
            \begin{align}\label{sax1}
                p_1^{\nu} (t)&=\sum_{m=0}^{\infty}(-1)^m \lambda^{m+1}\sum_{r=0}^{m}\binom{m}{r}
                t^{\nu (m +1)}E_{2\nu,\nu(m+1)+1}^{m+1}(-\lambda^2
                t^{2\nu})\\
                \nonumber &= \sum_{m=0}^{\infty}(-1)^m (\lambda t^{\nu})^{m+1}2^m
                E_{2\nu,\nu(m+1)+1}^{m+1}(-\lambda^2 t^{2\nu})\\
                \nonumber &= \sum_{m=0}^{\infty}(-1)^m (\lambda t^{\nu})^{m+1}2^m
                \sum_{r=0}^{\infty}\binom{m+r}{r}\frac{(-1)^r(\lambda^2 t^{2\nu})^r}{\Gamma(2\nu r+\nu(m+1)+1)}\\
                \nonumber &= \sum_{m=0}^{\infty}(-1)^m (\lambda t^{\nu})^{m+1}2^m
                \sum_{r=0}^{\infty}\binom{-(m+1)}{r}(\lambda^2 t^{2\nu})^r\frac{1}{2\pi i}
                \int_{Ha}e^w w^{-2\nu r-\nu(m+1)-1}\mathrm dw\\
                \nonumber &= \sum_{m=0}^{\infty}(-1)^m (\lambda t^{\nu})^{m+1}2^m \frac{1}{2\pi i}
                \int_{Ha}e^w w^{-\nu(m+1)-1}
                \left[\sum_{r=0}^{\infty}\binom{-(m+1)}{r}(\lambda^2 t^{2\nu}w^{-2\nu})^r\right] \mathrm dw\\
                \nonumber &= \sum_{m=0}^{\infty}(-1)^m \lambda t^{\nu})^{m+1}2^m
                \frac{1}{2\pi i}\int_{Ha}e^w
                \frac{w^{-\nu(m+1)-1}}{(\lambda^2 t^{2\nu}w^{-2\nu}+1)^{m+1}}\mathrm dw  \\
                \nonumber &= \frac{1}{2\pi i}\int_{Ha} e^w \frac{\lambda t^{\nu}w^{-\nu-1}}{\lambda^2 t^{2\nu}w^{-2\nu}+1}
                \left[\sum_{m=0}^{\infty}(-1)^m \left(\frac{2 w^{-\nu}\lambda
                t^{\nu}}{\lambda^2 t^{2\nu}w^{-2\nu}+1}\right)^{m}\right]\mathrm dw\\
                \nonumber &= \frac{1}{2\pi i}\int_{Ha}\frac{\lambda t^{\nu}w^{\nu-1}e^w}{(w^{\nu}+\lambda t^{\nu})^2}
                \mathrm dw\\
                \nonumber &= \frac{\lambda t^{\nu}}{\nu}E_{\nu,\nu}(-\lambda t^{\nu}),
            \end{align}
            where we have used in the last equality the fact that
            \begin{align}
                E_{\nu, \nu}(x)=\nu\frac{\mathrm d}{\mathrm dx}E_{\nu, 1}(x)&= \frac{\nu}{2\pi i}
                \frac{\mathrm d}{\mathrm dx}\int_{Ha}
                \frac{e^w w^{\nu-1}}{w^{\nu}-x} \mathrm dw
                = \frac{\nu}{2\pi i}\int_{Ha}\frac{e^w w^{\nu-1}}{(w^{\nu}-x)^2}\mathrm dw,
            \end{align}
            and we have applied the contour-integral representation of the reciprocal of the Gamma function
            \begin{equation}
                \frac{1}{\Gamma(x)}=\frac{1}{2\pi i}\int_{Ha}e^u u^{-x}\mathrm du,
            \end{equation}
            where $Ha$ stands for the Hankel contour (see formula 5.9.2, pg. 139 in \citet{olv}).

            We notice that equation \eqref{sax1} gives the result obtained for the time-fractional
            Poisson process in \citet{Beghin} as expected.
            Moreover by considering that
            \begin{equation}
                \int_0^{+\infty}e^{-st}\frac{\lambda
                t^{\nu}}{\nu}E_{\nu,\nu}(-\lambda
                t^{\nu}) \mathrm dt=\frac{\lambda s^{\nu-1}}{(\lambda+s^{\nu})^2},
            \end{equation}
            we retrieve, for the case $\nu = \nu_0 = \nu_1$ that
            \begin{align}
                p_1^{\nu} (t)=\frac{\lambda t^{\nu}}{\nu}E_{\nu,\nu}(-\lambda
                t^{\nu}),
            \end{align}
            that is the result obtained for the time-fractional Poisson process
            (see formula (2.11) of \citet{Beghin}).

            By applying formula (34) of \citet{Saxena} it is possible to give an explicit expression for
            $p_k^{\nu_k} (t)$, for any $k\geq 2$, in terms of
            cumbersome sums of generalized Mittag--Leffler functions.
        \end{os}

        \begin{os}
            A different way to give a representation of the state
            probability in the state-dependent Poisson process is given by
            the following integral approach; starting from \eqref{casa}, we
            have
            \begin{align}
                \label{casai3}
                \tilde{p}_k^{\nu_k} (s) & = \int_0^{+\infty}e^{-st}p_k^{\nu_k}(t) \mathrm dt
                = \lambda^k \frac{s^{\nu_0-1}}{\prod_{j=0}^k
                (s^{\nu_j}+\lambda)} \\
                & = \left( \int_0^\infty e^{-\lambda w_0} s^{\nu_0-1} e^{-w_0s^{\nu_0}} \mathrm dw_0 \right)
                \left( \prod_{j=1}^k \int_0^\infty e^{-\lambda w_j} \lambda e^{-w_j s^{\nu_j}} \mathrm dw_j
                \right). \notag
            \end{align}
            For the following developments, it is useful to recall that the inverse
            process of a $\nu$-stable subordinator $\mathcal{H}^{\nu}(t)$,
            $t \ge 0$, namely $\mathcal{L}^{\nu}(t)$, $t \ge 0$, is such that
            \begin{equation}
                \Pr\{\mathcal{L}^{\nu}(t)<x\} = \Pr\{\mathcal{H}^{\nu}(x)>t\},\qquad
                x, \: t \ge 0.
            \end{equation}
            Hence the relation between the law $l_{\nu}(x,t)$ of the process
            $\mathcal{L}^{\nu}(t)$ and the law $h_{\nu}(x,t)$ of the process
            $\mathcal{H}^{\nu}(t)$ is given by (see for example
            \citet{Mirko})
            \begin{equation}
                l_{\nu}(x,t)=\frac{\Pr\{\mathcal{L}^{\nu}(t)\in \mathrm dx\}}{\mathrm dx}=\frac{\partial}{\partial
                x}\Pr\{\mathcal{H}^{\nu}(x)>t\}= \frac{\partial}{\partial
                x}\int_t^{\infty} h_{\nu}(s,x) \mathrm ds,
            \end{equation}
            or otherwise
            \begin{equation}
                \int_t^{\infty} \Pr\{\mathcal{H}^{\nu}(x)\in \mathrm dw\}=\int_0^{x} \Pr\{\mathcal{L}^{\nu}(t)\in
                \mathrm dz\}.
            \end{equation}
            Hence the density of the inverse process $\mathcal{L}^{\nu}(t)$
            reads
            \begin{equation}
                \Pr\{\mathcal{L}^{\nu}(t)\in
                \mathrm dx\}=\frac{\partial}{\partial x}\int_t^{\infty}\Pr\{\mathcal{H}^{\nu}(x)\in
                \mathrm dw\}.
            \end{equation}
            Therefore the Laplace transform of $l_{\nu}(x,t)$ is given by
            \begin{align}\label{inver}
                \tilde {l}_{\nu}(x,s)= \int_0^\infty e^{-st}l_{\nu}(x,t) \mathrm dt
                & =\int_0^\infty e^{-st}
                \frac{\mathrm d}{\mathrm dx}\left[\int_t^{+\infty}\Pr\{\mathcal{H}^{\nu}(x)\in \mathrm dw\}\right]
                \mathrm dt\\
                \nonumber &=\frac{\mathrm d}{\mathrm dx}\int_0^\infty \Pr\{\mathcal{H}^{\nu}(x)\in
                \mathrm dw\}\int_0^w e^{-st} \mathrm dt\\
                \nonumber &=\frac{1}{s}\frac{\mathrm d}{\mathrm dx}\left[\int_0^\infty (1-e^{-sw})
                \Pr\{\mathcal{H}^{\nu}(x)\in
                \mathrm dw\} \right]= s^{\nu-1}e^{-xs^{\nu}},
            \end{align}
            where we used the fact that
            \begin{align}
                \tilde{h}_{\nu}(x,s)=\int_0^{+\infty}e^{-st}h_{\nu}(x,t) \mathrm dt=e^{-xs^{\nu}}.
            \end{align}
            We also notice that the explicit form of the law of the inverse
            of the stable subordinator is known in terms of Wright
            functions \citep{Mirko}.
            Going back to equation \eqref{casai3} and in view of \eqref{inver}, we can write
            \begin{align}
                \label{casai2}
                \tilde{p}_k^{\nu_k} (s) & = \left( \int_0^\infty e^{-\lambda w_0} \mathrm dw_0
                \int_0^\infty e^{-st} l_{\nu_0}(w_0,t) \mathrm dt \right) \left( \prod_{j=1}^k
                \lambda \int_0^\infty e^{-\lambda w_j} \mathrm dw_j \int_0^\infty
                e^{-sx} h_{\nu_j} (x,w_j) \mathrm dx \right)\\
                & = \int_0^\infty \mathrm dw_0 e^{-\lambda w_0}\cdots\int_0^\infty \mathrm dw_k e^{-\lambda w_k} \left[
                \int_0^\infty e^{-st} l_{\nu_0}(w_0,t) \mathrm dt
                \prod_{j=1}^k\lambda \int_0^\infty e^{-sx} h_{\nu_j} (x,w_j) \mathrm dx\right]. \notag
            \end{align}
            Hence by inverting the Laplace transform we obtain
            \begin{align}
                \label{casai1}
                p_k^{\nu_k} (t) = \lambda^k \int_0^{\infty} \mathrm dw_0 e^{-\lambda w_0}\int_0^\infty
                \mathrm dw_1 e^{-\lambda w_1}\cdots\int_0^\infty \mathrm
                dw_k e^{-\lambda w_k}\left[l_{\nu_0}(w,t)\ast h_{\nu_1,\cdots, \nu_k}(w_1,\cdots, w_k,t)\right],
            \end{align}
            where the symbol $\ast$ stands for the convolution of the law of the inverse stable subordinator
            $l_{\nu_0}$ and the distribution of the sum of $k$ independent stable subordinators
            $h_{\nu_1,\cdots, \nu_k}(w_1,\cdots, w_k,t)$. In other words
            $l_{\nu_0}(w,t)\ast h_{\nu_1,\cdots, \nu_k}(w_1,\cdots, w_k,t)$ is the distribution of the r.v.
            \begin{align}
                \mathcal{L}^{\nu_0}(t)+\sum_{j=1}^{k}\mathcal{H}^{\nu_j}(t).
            \end{align}
        \end{os}

        \begin{os}
            Another interesting characterization of the state-probabilities of
            the above process is given by the following observation.
            First of all, since for $m=1$, $E^1_{\nu, \beta}(\cdot)=E_{\nu, \beta}(\cdot)$, from \eqref{rico} we have that
            \begin{align}
                \label{ricos}
                &\int_0^{+\infty}e^{-st}t^{\nu-1}E_{\nu,\nu}(-\lambda
                t^{\nu})\mathrm dt=\frac{1}{\lambda+s^{\nu}},\\
                &\int_0^{+\infty}e^{-st}E_{\nu,1}(-\lambda
                t^{\nu})\mathrm dt=\frac{s^{\nu-1}}{\lambda+s^{\nu}}.
            \end{align}
            Hence, from \eqref{casa}, we find that
            \begin{align}
                \label{ML}
                \tilde{p}_k^{\nu_k} (s)&=\frac{\lambda^k s^{\nu_0-1}}{\prod_{j=0}^k (s^{\nu_j}+\lambda)}\\
                \nonumber &= \left[\int_0^{+\infty}e^{-st}E_{\nu_0,1}(-\lambda
                t^{\nu})\mathrm dt\right] \prod_{j=1}^k\left[\int_0^{+\infty}e^{-st}\lambda t^{\nu_j-1}
                E_{\nu_j,\nu_j}(-\lambda
                t^{\nu_j})\mathrm dt\right].
            \end{align}
            On the other hand, from \eqref{casai2}, we have
            \begin{align}
                \label{cara}
                \tilde{p}_k^{\nu_k} (s) & = \left( \int_0^\infty e^{-\lambda w_0} \mathrm dw_0
                \int_0^\infty e^{-st} l_{\nu_0}(w_0,t) \mathrm dt \right) \left( \prod_{j=1}^k
                \lambda \int_0^\infty e^{-\lambda w_j} \mathrm dw_j \int_0^\infty e^{-sx} h_{\nu_j} (x,w_j)
                \mathrm dx \right) \\
                &=\left( \int_0^\infty e^{-st} \tilde{l}_{\nu_0} (\lambda,t) \mathrm dt \right)
                \left( \prod_{j=1}^k \lambda \int_0^\infty e^{-sx} \tilde{h}_{\nu_j}
                (x,\lambda) \mathrm dx \right),     \notag
            \end{align}
            which clearly coincides with \eqref{ML}.

            By inverting the Laplace transform, we obtain the following
            result
            \begin{align}
                \label{praba}
                p_k^{\nu_k} (t)&=  E_{\nu_0,1}(-\lambda t^{\nu_0})
                \bigast_{j=1}^k \lambda t^{\nu_j-1}E_{\nu_j,\nu_j}(-\lambda
                t^{\nu_j})\\
        \nonumber &= \int_0^{\infty}E_{\nu_0,1}\left(-\lambda(t-s)^{\nu_0}\right)g(s)ds,
            \end{align}
        where $g(s)$ is the $k-$th time iterated convolution of the functions
        \begin{equation}\nonumber
        h_j(t)= \lambda t^{\nu_j-1}E_{\nu_j, \nu_j}\left(-\lambda t^{\nu_j}\right)
        \end{equation}
            We notice that, the last equation can be written in terms of the Prabhakar operator,
            that is an integral operator involving a Mittag--Leffler function
            as kernel \citep{Prab}. From equation \eqref{praba} we have an integral representation,
            in explicit form given by
            \begin{align}
                &p_1^{\nu_1}(t)=\int_0^t E_{\nu_0,1}(-\lambda(t-s)^{\nu_0})E_{\nu_1,\nu_1}
                (-\lambda s^{\nu_1})s^{\nu_1-1}ds\\
                \nonumber &p_2^{\nu_2}(t)= \int_0^t ds_1 E_{\nu_0,1}(-\lambda(t-s_1)^{\nu_0})
                \int_0^{s_1}ds_2 s_2^{\nu_1-1}E_{\nu_1,\nu_1}(-\lambda s_2^{\nu_1})(s_1-s_2)^{\nu_2-1}
                E_{\nu_2,\nu_2}(-\lambda (s_1-s_2)^{\nu_2})\\
                \nonumber &\vdots\\
                \nonumber &p_k^{\nu_k}(t)= \int_0^t ds_1 E_{\nu_0,1}(-\lambda(t-s_1)^{\nu_0})
                \int_0^{s_1}ds_2\dots \times \\
                & \qquad \qquad \times \int_0^{s_k}ds_k s_k^{\nu_{k-1}-1}E_{\nu_{k-1},\nu_{k-1}}
                (-\lambda s_k^{\nu_{k-1}})(s_{k-1}-s_k)^{\nu_k-1}
                E_{\nu_k,\nu_k}(-\lambda (s_{k-1}-s_k)^{\nu_k}). \notag
            \end{align}
        \end{os}

        In order to find the mean value of the distribution $p_k^{\nu_k} (t)$, we multiply all the terms of
        \eqref{aa} for $k$ and sum over all the states
        so that
        \begin{align}
            \label{sum}
            \sum_{k=0}^{\infty}k\frac{\mathrm{d}^{\nu_k}}{\mathrm{d}t^{\nu_k}} p_k^{\nu_k}(t)
            &= -\lambda \sum_{k=0}^{\infty}k p_k^{\nu_k}(t) +
            \lambda \sum_{k=0}^{\infty}k p_{k-1}^{\nu_{k-1}}(t)\\
            \nonumber &= -\lambda \sum_{k=0}^{\infty}k p_k^{\nu_k}(t) +
            \lambda \sum_{k=0}^{\infty}(k+1)p_{k}^{\nu_{k}}(t)= \lambda.
        \end{align}
        In the case $\nu_k = \nu$, for all $k$, we have
        \begin{equation}
            \frac{\mathrm{d}^{\nu}}{\mathrm{d}t^{\nu}}\sum_{k=0}^{\infty}k p_k^{\nu}(t)
            = \frac{\mathrm{d}^{\nu}}{\mathrm{d}t^{\nu}}\mathbb{E}(N_{\nu}(t))=\lambda,
        \end{equation}
        whose solution is given by
        $\mathbb{E}(N_{\nu}(t))=\frac{\lambda t^{\nu}}{\Gamma(\nu+1)}$ (see formula (2.7) in \citet{Beghin}).
        We notice that it is possible to find an interesting summation formula by using the Laplace transform
        in equation \eqref{sum}.
        Indeed we have
        \begin{align}
            \sum_{k=0}^{\infty}k s^{\nu_k}\tilde{p}_k^{\nu_k}(s)&= \lambda s^{-1},
        \end{align}
        and recalling that
        \begin{align}\label{dome}
            \tilde{p}_k^{\nu_k} (s) = \frac{\lambda^{k} s^{\nu_0-1}}{\prod_{j=0}^k (s^{\nu_j}+\lambda)},
        \end{align}
        we find that
        \begin{equation}
            \sum_{k=0}^{\infty} \frac{k \lambda^{k} s^{\nu_0+\nu_k}}{\prod_{j=0}^k (s^{\nu_j}+\lambda)} = \lambda.
        \end{equation}
        This summation formula is not trivial and we can check that it works for example in the special case
        $\nu = \nu_k$ for all $k$.
        \begin{align}
            \sum_{k=1}^{\infty}\frac{k \lambda^k s^{2\nu}}{(s^{\nu}+\lambda)^{k+1}}
            &= \frac{s^{2\nu}}{s^{\nu}+\lambda}\sum_{k=1}^{\infty}\frac{k \lambda^k}
            {(s^{\nu}+\lambda)^{k}}   \\
            \nonumber &= \frac{\lambda s^{2\nu}}{s^{\nu}+\lambda}\left[\frac{d}{dw}\sum_{k=1}^{\infty}
            \frac{w^k}{(s^{\nu}+\lambda)^k}\right]_{w=\lambda}\\
            \nonumber &= \frac{\lambda s^{2\nu}}{s^{\nu}+\lambda}\left[\frac{d}{dw}
            \frac{w}{s^{\nu}+\lambda-w}\right]_{w=\lambda}\\
            \nonumber &= \frac{\lambda s^{2\nu}}{s^{\nu}+\lambda}\left[\frac{s^{\nu}
            +\lambda}{(s^{\nu}+\lambda-w)^2}\right]_{w=\lambda}
            \nonumber = \lambda
        \end{align}

        \begin{os}
            We notice that for the probability generating function $G(u,t)$
            of the process $N(t)$, $t\geq 0$, the following
            representation holds for $u\in [0,1]$
            \begin{align}\label{gen}
                \int_0^{\infty}e^{-st}G(u,t)\mathrm dt&=\sum_{k=0}^{\infty} u^k
                \int_0^{\infty} e^{-st}\Pr\{N(t)=k\} \mathrm dt\\
                \nonumber &=\int_0^{\infty}e^{-st}\Pr\{\min_{0\leq k \leq N(t)}X_k>
                1-u\} \mathrm dt\\
                \nonumber &= \sum_{k=0}^{\infty}\frac{\lambda^k u^k s^{\nu_0-1}}{\prod_{j=0}^{k}(\lambda+
                s^{\nu_j})},
            \end{align}
            where $X_k$, $k\geq 1$, are i.i.d.\ random variables uniform
            in $[0,1]$.

            The representation of the probability generating function as
            \begin{equation}
                G(u,t)= \Pr\{\min_{0\leq k \leq N(t)}X_k>
                1-u\},
            \end{equation}
            follows the same lines of the time and space fractional
            Poisson processes described in \citet{fed}. In \eqref{gen},
            the driving process is the state-dependent Poisson process.
        \end{os}

    \section{Alternative forms of the state-dependent Poisson process}

        \label{terzo}
        We construct now a point process with independent but not i.d.\ inter-arrival times. In
        particular,
        the waiting time $U_k$ between the $k$th and $(k+1)$th arrival is distributed with p.d.f.\
        \begin{align}
            f_{U_k}(t) = \lambda t^{\nu_k-1} E_{\nu_k,\nu_k} (-\lambda t^{\nu_k}), \; t>0
        \end{align}
        Let us now call $\mathcal{N}(t)$, $t \ge 0$, such process and we have the following theorem.
        \begin{te}
            The state probabilities $p_k(t)$ of the process $\mathcal{N}(t)$, $t \ge
            0$, are governed by the integral equation
            \begin{equation}
                p_k(t)-p_k(0)= -\lambda I^{\nu_k} p_k(t)+ \lambda I^{\nu_{k-1}}
                p_{k-1}(t),\qquad t \ge 0, \: \nu_k \in (0,1],
            \end{equation}
            where $I^{\nu}$ is the fractional integral in the sense of
            Riemann--Liouville (see \eqref{riml}).
            Moreover, their Laplace transforms are given by
            \begin{equation}
                \int_0^\infty e^{-s t} \Pr \{ \mathcal{N}(t) = k \}
                \mathrm{d}t= \lambda^k \frac{s^{\nu_k-1}}{\prod_{j=0}^k (s^{\nu_j}+\lambda)}.
            \end{equation}

            \begin{proof}
                First, we observe that the Laplace transform of the state
                probabilities, can be directly calculated by using the definition of the process
                $\mathcal{N}(t)$
                \begin{align}\label{ren}
                    & \int_0^\infty e^{-s t} \Pr \{ \mathcal{N}(t) = k \} \mathrm{d}t \\
                    & = \int_0^\infty e^{-st} \mathrm{d}t \left[ \int_0^t \Pr (U_0+\dots +U_{k-1} \in \mathrm{d}y)
                    - \int_0^t \Pr (U_0+\dots +U_k \in \mathrm{d}y) \right] \notag \\
                    & = \frac{1}{s} \int_0^\infty e^{-sy} \left[ \Pr (U_0+\dots +U_{k-1} \in \mathrm{d}y)
                    - \Pr (U_0+\dots +U_k \in \mathrm{d}y) \right] \notag \\
                    & = \frac{1}{s} \left[ \frac{\lambda^k}{\prod_{j=0}^{k-1} (\lambda+s^{\nu_j})}
                    - \frac{\lambda^{k+1}}{\prod_{j=0}^k (\lambda + s^{\nu_j})} \right] \notag \\
                    & = \frac{1}{s} \frac{\lambda^k (\lambda+s^{\nu_j})-\lambda^{k+1}}{\prod_{j=0}^k
                    (\lambda+s^{\nu_k})} \notag \\
                    & = \lambda^k \frac{s^{\nu_k-1}}{\prod_{j=0}^k (s^{\nu_j}+\lambda)}. \notag
                \end{align}
                We notice that, unfortunately, it does not coincide with \eqref{casa}.

                Hence we have two distinct processes that can be matched only by
                assuming that $\nu_k=\nu$ for each $k=0,1\dots$
                (in other words in the time-fractional Poisson case).

                We can also find in explicit way the integral equation governing
                the probabilities $p_k(t)=\Pr \{ \mathcal{N}(t) = k \}$.
                We start from the ordinary difference-differential equation,
                governing the Poisson process
                \begin{equation}
                    \frac{\mathrm d p_k}{\mathrm dt}(t)= -\lambda p_k(t)+\lambda p_{k-1}(t),
                \end{equation}
                with initial conditions
                \begin{align}
                    p_k(0) =
                    \begin{cases}
                        1 \quad k=0, \\
                        0 \quad k \geq 1.
                    \end{cases}
                \end{align}
                By integration with respect to $t$, we have the equivalent integral
                equation
                \begin{equation}
                    \label{rl}
                    p_k(t)-p_k(0)= -\lambda \int_0^t p_k(s) \mathrm ds+\lambda\int_0^t
                    p_{k-1}(s) \mathrm ds,
                \end{equation}

                In order to obtain a fractional generalization of the last equation, we
                replace the first-order integral in the right hand side of \eqref{rl}, with state-dependent
                fractional integrals, i.e.
                \begin{equation}
                    \label{rl1}
                    p_k(t)-p_k(0)= -\lambda I^{\nu_k} p_k(t)+ \lambda I^{\nu_{k-1}}
                    p_{k-1}(t),\quad t \ge 0, \: \nu_k \in (0,1], \: k \ge 0,
                \end{equation}
                where $I^{\nu_k}$ is the fractional integral in the sense of
                Riemann--Liouville.
                For $k=0$, we have
                \begin{equation}
                    p_0(t)-1= -\lambda I^{\nu_0} p_0(t),
                \end{equation}
                whose solution is simply given by
                $p_0(t)= E_{\nu_0,1}(-\lambda t^{\nu_0})$.
                With $k=1$, we obtain
                \begin{equation}
                    p_1(t)= -\lambda I^{\nu_1} p_1(t)+ \lambda I^{\nu_{0}}
                    p_{0}(t),
                \end{equation}
                whose Laplace transform, after some simple calculation, is given by
                \begin{equation}
                    \tilde{p}_1(t)= \frac{\lambda s^{\nu_1-1}}{(s^{\nu_0}+\lambda)(s^{\nu_1}+\lambda)},
                \end{equation}
                and coincides with \eqref{ren} in the case $k=1$. Then, it is
                immediate to prove that, for any order $k\geq 1$, the Laplace
                transform of $p_k(t)$, is given by \eqref{ren}. This proves that
                \eqref{rl1} is the governing equation for $\mathcal{N}(t)$, $t \ge 0$,
                as claimed.
            \end{proof}
        \end{te}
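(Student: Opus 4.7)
The plan is to handle the two assertions separately, deriving the Laplace transform directly from the renewal-theoretic definition of $\mathcal{N}(t)$ and then verifying that this Laplace transform satisfies the proposed fractional integral equation by induction.

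First I would exploit the fact that $\{\mathcal{N}(t) = k\} = \{U_0 + \cdots + U_{k-1} \le t < U_0 + \cdots + U_k\}$, so that
\begin{equation*}
\Pr\{\mathcal{N}(t)=k\} = \Pr\{U_0 + \cdots + U_{k-1} \le t\} - \Pr\{U_0 + \cdots + U_k \le t\}.
\end{equation*}
Since the $U_j$ are independent and the Mittag--Leffler identity recalled in \eqref{ricos} gives $\int_0^\infty e^{-st} f_{U_j}(t)\,\mathrm dt = \lambda/(\lambda + s^{\nu_j})$, the Laplace transform of the density of $U_0 + \cdots + U_k$ is $\prod_{j=0}^k \lambda/(\lambda + s^{\nu_j})$, and dividing by $s$ produces the transform of the cumulative distribution function. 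Taking the difference and simplifying the numerator via $\lambda^k(\lambda + s^{\nu_k}) - \lambda^{k+1} = \lambda^k s^{\nu_k}$ yields the announced expression $\lambda^k s^{\nu_k-1}/\prod_{j=0}^k(s^{\nu_j}+\lambda)$.

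For the integral equation, I would motivate \eqref{rl1} by starting from the classical Poisson difference--differential system, integrating once on $[0,t]$ to get the renewal-type integral form \eqref{rl}, and then replacing the ordinary integrals by Riemann--Liouville integrals whose orders track the state dependence. To verify that \eqref{rl1} genuinely governs $\mathcal{N}(t)$, I would apply the Laplace transform and use $\mathcal{L}\{I^{\nu}f\}(s) = s^{-\nu}\tilde f(s)$ together with $p_k(0) = \mathbb{1}_{k=0}$ to obtain the recursion
\begin{equation*}
\tilde p_k(s)\,\frac{s^{\nu_k}+\lambda}{s^{\nu_k}} = \frac{p_k(0)}{s} + \lambda s^{-\nu_{k-1}} \tilde p_{k-1}(s).
\end{equation*}
Solving for $k=0$ gives $\tilde p_0(s) = s^{\nu_0-1}/(s^{\nu_0}+\lambda)$, and an induction on $k$ then yields exactly the closed form obtained from the renewal computation, confirming that the two definitions of $p_k$ agree.

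The proof contains no serious obstacle: the two calculations are essentially parallel and both lean on the single Mittag--Leffler Laplace identity \eqref{ricos}. The only delicate point is bookkeeping around $k=0$ (where the $p_{k-1}$ term and the initial condition require separate treatment) and making transparent why the renewal calculation and the inductive resolution of the fractional integral equation produce the same rational expression in $s$; this is ultimately just the algebraic identity $\lambda^k(\lambda+s^{\nu_k}) - \lambda^{k+1} = \lambda^k s^{\nu_k}$ that telescopes the two terms in the difference of CDFs.
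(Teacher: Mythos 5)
Your proposal is correct and follows essentially the same route as the paper: the Laplace transform is obtained from the renewal decomposition $\Pr\{\mathcal{N}(t)=k\}=\Pr\{U_0+\cdots+U_{k-1}\le t\}-\Pr\{U_0+\cdots+U_k\le t\}$, and the integral equation is motivated from the integrated Poisson system and then verified in the Laplace domain via $\mathcal{L}\{I^{\nu}f\}(s)=s^{-\nu}\tilde f(s)$. If anything, your explicit recursion and induction on $k$ is slightly more complete than the paper, which only checks $k=0,1$ and declares the general case immediate.
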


        In order to highlight the relation between the two processes
        $N(t)$, $t \ge 0$, and $\mathcal{N}(t)$, $t \ge 0$, by rearranging \eqref{dome}, we can write the following
        \begin{align}
            \tilde{p}_k^{\nu_k}(s) = s^{\nu_0-\nu_k} \frac{\lambda^k s^{\nu_k-1}}{\prod_{j=0}^k (s^{\nu_j}+\lambda)}.
        \end{align}
        Therefore if $(\nu_k-\nu_0) > 0$ for a fixed $k$ we have that
        \begin{align}
            p_k^{\nu_k} (t) & = \frac{1}{\Gamma(\nu_k-\nu_0)} \int_0^t (t-y)^{(\nu_k-\nu_0)-1} \Pr \{ \mathcal{N}(y)=k \}
            \, \mathrm dy \\
            & = I^{\nu_k-\nu_0} \Pr \{ \mathcal{N}(t)=k\}, \qquad t \ge 0, \notag
        \end{align}
        where $I^{\nu_k-\nu_0}$ is the Riemann--Liouville fractional integral. Note that since the Riemann--Liouville
        fractional derivative (that we indicate here with $D^\alpha$)
        is the left-inverse operator to the Riemann--Liouville fractional integral
        we also obtain the related relation
        \begin{align}
            D^{\nu_k-\nu_0} p_k^{\nu_k} (t) = \Pr \{ \mathcal{N}(t)=k\}, \qquad t \ge 0, \: (\nu_k-\nu_0)> 0.
        \end{align}

        Conversely, in view of \eqref{ren}, we can write
        \begin{align}
            \int_0^\infty e^{-s t} \Pr \{ \mathcal{N}(t) = k \} \mathrm{d}t
            = s^{\nu_k-\nu_0} \frac{\lambda^k s^{\nu_0-1}}{\prod_{j=0}^k (s^{\nu_j}+\lambda)},
        \end{align}
        and thus if $(\nu_0 -\nu_k)> 0$, for a fixed $k$, we obtain that
        \begin{align}
            \Pr \{ \mathcal{N}(t) = k \}
            & = \frac{1}{\Gamma(\nu_0-\nu_k)} \int_0^t (t-y)^{(\nu_0-\nu_k)-1} p_k^{\nu_k}(y)
            \, \mathrm dy \\
            & = I^{\nu_0-\nu_k} p_k^{\nu_k}(t), \qquad t \ge 0, \notag
        \end{align}
        and that
        \begin{align}
            D^{\nu_0-\nu_k} \Pr \{ \mathcal{N}(t) = k \} = p_k^{\nu_k}(t),
            \qquad t \ge 0, \: (\nu_0-\nu_k)> 0.
        \end{align}
        Finally, we have the following relation between the state probabilities of the two processes
        \begin{equation}
            \label{rel}
            p_k^{\nu_k}(t)=
            \begin{cases}
                I^{\nu_k-\nu_0} \Pr \{ \mathcal{N}(t)=k\},\quad &\nu_k > \nu_0,\\
                D^{\nu_0-\nu_k} \Pr \{ \mathcal{N}(t)=k\}, \quad &\nu_k < \nu_0.
            \end{cases}
        \end{equation}
        In order to deepen the meaning of this relation, we consider as an example the relation
        between $p_1^{\nu_1}(t)$ and $\Pr \{ \mathcal{N}(t)= 1\}$.

        By inverting the Laplace transform \eqref{ren}, we obtain
        that
        \begin{equation}
            \Pr \{ \mathcal{N}(t)=1\}=\sum_{m=0}^{\infty}(-1)^m \lambda^{m+1}\sum_{r=0}^{m}\binom{m}{r}
            t^{\nu_0(m-r)+\nu_1 r+\nu_0}E_{\nu_0+\nu_1,\nu_0(m-r)+\nu_1
            r+\nu_0+1}^{m+1}(-\lambda^2
            t^{\nu_0+\nu_1}),
        \end{equation}
        by calculation similar to those given above for $p_1^{\nu_1}(t)$.
        Recalling that (\citet{Sax}, page 123)
        \begin{equation}
            I^{\alpha}[t^{\gamma-1}E^{m}_{\beta, \gamma}(at^{\beta})]=t^{\alpha+\gamma-1}
            E^{m}_{\beta, \alpha+\gamma}(at^{\beta}),
        \end{equation}
        and assuming, for example $\nu_1>\nu_0$, we find that
        \begin{align}
            &I^{\nu_1-\nu_0}\Pr \{ \mathcal{N}(t)=1\}\\
            \nonumber &= \sum_{m=0}^{\infty}(-1)^m \lambda^{m+1}\sum_{r=0}^{m}\binom{m}{r}
            I^{\nu_1-\nu_0}\left(t^{\nu_0(m-r)+\nu_1 r+\nu_0}E_{\nu_0+\nu_1,\nu_0(m-r)+\nu_1
            r+\nu_0+1}^{m+1}(-\lambda^2
            t^{\nu_0+\nu_1})\right)\\
            \nonumber &= \sum_{m=0}^{\infty}(-1)^m \lambda^{m+1}\sum_{r=0}^{m}\binom{m}{r}
            t^{\nu_0(m-r)+\nu_1 r+\nu_1}E_{\nu_0+\nu_1,\nu_0(m-r)+\nu_1
            r+\nu_1+1}^{m+1}(-\lambda^2
            t^{\nu_0+\nu_1})= p_1^{\nu_1}(t),
        \end{align}
        as expected.

        Moreover, we observe that, since
        $p_0^{\nu_0}(t)= \Pr \{ \mathcal{N}(t)=0 \}=E_{\nu_0,1}(-\lambda t^{\nu_0})$,
        we have
        \begin{equation}
            \sum_{k=1}^{\infty}p_k^{\nu_k}(t)= \sum_{k=1}^{\infty}\Pr \{ \mathcal{N}(t)=k\}
            = 1-E_{\nu_0,1}(-\lambda t^{\nu_0}).
        \end{equation}
        In view of \eqref{rel}, this implies that
        \begin{equation}
            \sum_{k=1}^{\infty}\Pr \{ \mathcal{N}(t)=k\}=  \sum_{k=1}^{\infty}p_k^{\nu_k}(t)=
            \sum_{k\colon \nu_k> \nu_0} I^{\nu_k-\nu_0} \Pr \{ \mathcal{N}(t)=k\}
            + \sum_{k\colon \nu_k<\nu_0} D^{\nu_0-\nu_k} \Pr \{ \mathcal{N}(t)=k\}.
        \end{equation}

        \bigskip

        The second process we construct here, denoted by $\widehat{N}(t)$, $t \ge 0$,
        is given by the following generalization of the Poisson process,
        whose univariate probabilities are given by
        \begin{equation}
            \label{p2}
            \Pr\{\widehat{N}(t)= j\}=\frac{\frac{(\lambda t)^j}{\Gamma(\nu_j j+1)}
            \frac{1}{E_{\nu_j,1}(\lambda t)}}{\sum_{j=0}^{+\infty} \frac{(\lambda t)^j}{\Gamma(\nu_j j+1)}
            \frac{1}{E_{\nu_j,1}(\lambda t)}}, \qquad j \geq 0,
        \end{equation}
        where $\lambda >0$, $0<\nu_j\leq 1$. We can treat it as a generalized Poisson process with state-dependent
        probabilities.
        Indeed, we notice that, if $\nu_j = 1$, for all $j$, we have
        \begin{align}
            \Pr\{\widehat{N}(t)= j\}&=\frac{\frac{(\lambda t)^j}{\Gamma(j+1)}
            \frac{1}{e^{\lambda t}}}{\sum_{j=0}^{+\infty} \frac{(\lambda t)^j}{\Gamma(j+1)}
            \frac{1}{e^{\lambda t}}}
            = \frac{(\lambda t)^j}{j!}e^{-\lambda t} = \Pr\{\mathrm N(t)= j\},
        \end{align}
        that is the state probability of the homogeneous Poisson process.

        A similar construction was adopted in \citet{Beghin}.
        We notice that an analogous generalization was used by
        \citet{Person} in quantum mechanics, in relation to
        Mittag-Leffler type coherent states.
        We now recall from \citet{Bala}
        that the distribution \eqref{p2}
        can be regarded as a weighted Poisson sum. Indeed we notice that
        \begin{equation}
            \frac{(\lambda t)^j}{\Gamma(\nu_j j+1)}\frac{1}{E_{\nu_j,1}(\lambda t)}
            = \frac{\frac{j!}{\Gamma(\nu_j j+1)}\Pr\{\mathrm N(t)=j\}}{\sum_{k=0}^{+\infty}
            \frac{k!}{\Gamma(\nu_j k+1)}\Pr\{\mathrm N(t)=k\}}.
        \end{equation}
        Hence we have
        \begin{equation}
            \Pr\{\widehat{N}(t)= j\}=
            \frac{\frac{\frac{j!}{\Gamma(\nu_j j+1)}
            \Pr\{\mathrm N(t)=j\}}{\sum_{k=0}^{+\infty}\frac{k!}{\Gamma(\nu_j k+1)}
            \Pr\{\mathrm N(t)=k\}}}{\sum_{j=0}^{+\infty}
            \frac{\frac{j!}{\Gamma(\nu_j j+1)}\Pr\{\mathrm N(t)=j\}}{\sum_{k=0}^{+\infty}
            \frac{k!}{\Gamma(\nu_j k+1)}\Pr\{\mathrm N(t)=k\}}}.
        \end{equation}

        The probability generating function of \eqref{p2} is given by
        \begin{align}
            G(u,t)&= \sum_{k=0}^{\infty}u^k \Pr\{\widehat{N}(t)= k\}\\
            &=  \frac{\sum_{k=0}^{+\infty}\frac{(\lambda u t)^k}{\Gamma(\nu_k k+1)}
            \frac{1}{E_{\nu_k,1}(\lambda t)}}{\sum_{k=0}^{+\infty} \frac{(\lambda t)^k}{\Gamma(\nu_k k+1)}
            \frac{1}{E_{\nu_k,1}(\lambda t)}}.\notag
        \end{align}
        In the case $\nu_j = \nu$, for all $j\geq 0$, we have
        \begin{align}
            G(u,t)= \frac{\sum_{k=0}^{+\infty}\frac{(\lambda u t)^k}{\Gamma(\nu k+1)}
            \frac{1}{E_{\nu,1}(\lambda t)}}{\sum_{k=0}^{+\infty} \frac{(\lambda t)^k}{\Gamma(\nu k+1)}
            \frac{1}{E_{\nu,1}(\lambda t)}} = \frac{E_{\nu,1}(u\lambda t)}{E_{\nu,1}(\lambda t)},
        \end{align}
    that coincides with the equation (4.4) of \citet{Beghin}.\\
        By means of the generating function we can also find the explicit form of the mean value of the
        distribution \eqref{p2}, i.e.
        \begin{align}
            \mathbb{E}\widehat{N}(t)&= \frac{\lambda t\sum_{k=0}^{+\infty}
            \frac{k(\lambda t)^{k-1}}{\Gamma(\nu_k k+1)}\frac{1}{E_{\nu_k,1}
            (\lambda t)}}{\sum_{k=0}^{+\infty} \frac{(\lambda t)^k}{\Gamma(\nu_k k+1)}\frac{1}{E_{\nu_k,1}(\lambda t)}}
            = \frac{\lambda t\sum_{k=0}^{+\infty}\frac{(\lambda t)^{k}}{\nu_{k+1}\Gamma(\nu_{k+1}
            k+\nu_{k+1})}\frac{1}{E_{\nu_{k+1},1}(\lambda t)}}{\sum_{k=0}^{+\infty}
            \frac{(\lambda t)^k}{\Gamma(\nu_k k+1)}\frac{1}{E_{\nu_k,1}(\lambda t)}},
        \end{align}
        such that, when $\nu_k = \nu$ for all $k$, we recover the case considered in \citet{Beghin}
        and in \citet{Macci}, i.e.\
        \begin{align}
            \mathbb{E}\widehat{N}_{\nu}(t)=\frac{\lambda t E_{\nu, \nu}(\lambda t)}{\nu E_{\nu,1}(\lambda t)}.
        \end{align}

        We now consider a sequence of a random number of non-negative i.i.d.\ random variables with distribution
        $F(\beta)=\Pr(X_i\leq \beta)$, $i\geq 1$ and represented
        by $\widehat{N}(t)$. The distribution of the maximum and minimum of this sequence is given by
        \begin{align}
            \Pr\{\max\left(X_1,\cdots, X_{\widehat{N}(t)}\right)<\beta\}
            &= \frac{\sum_{k=0}^{+\infty}\frac{(\lambda F(\beta) t)^k}{\Gamma(\nu_k k+1)}
            \frac{1}{E_{\nu_k,1}(\lambda t)}}{\sum_{k=0}^{+\infty} \frac{(\lambda t)^k}{\Gamma(\nu_k k+1)}
            \frac{1}{E_{\nu_k,1}(\lambda t)}},\\
            \Pr\{\min\left(X_1,\cdots, X_{\widehat{N}(t)}\right)>\beta\}
            &= \frac{\sum_{k=0}^{+\infty}\frac{(\lambda[1- F(\beta)] t)^k}{\Gamma(\nu_k k+1)}
            \frac{1}{E_{\nu_k,1}(\lambda t)}}{\sum_{k=0}^{+\infty}
            \frac{(\lambda t)^k}{\Gamma(\nu_k k+1)}\frac{1}{E_{\nu_k,1}(\lambda t)}}.
        \end{align}
        In the case $\nu = \nu_k = 1$, for all $k$, we recover the distribution of the maximum and minimum
        of the homogeneous Poisson process.

    \section{State dependent fractional pure birth processes}

        In this section we consider a different point process which can be generalized in a state-dependent
        sense as we have done for the fractional Poisson process.
        We thus analyze a state-dependent fractional pure birth process (see \citet{fede} for the
        fractional case with constant order), where the probabilities
        are governed by the following equations

        \begin{align}
            \label{bir1}
            \begin{cases}
                \frac{\mathrm{d}^{\nu_k}}{\mathrm{d}t^{\nu_k}} p_k^{\nu_k}(t)
                = -\lambda_k p_k^{\nu_k}(t) +\lambda_{k-1} p_{k-1}^{\nu_{k-1}}(t), & k \ge 1,
                \: t > 0, \: \nu_k \in (0,1], \\
                p_k^{\nu_k}(0) =
                \begin{cases}
                    1, \quad k = 1, \\
                    0, \quad k \geq 2.
                \end{cases}
            \end{cases}
        \end{align}

        As in the Section \ref{due} the Laplace transform of the solution to \eqref{bir1} can be found rather
        easily. This is done in the following proposition.
        \begin{prop}
            The Laplace transform of the solution to the state-dependent
            fractional pure-birth process \eqref{bir1} reads
            \begin{align}
                \tilde{p}_k^{\nu_k} (s) = \int_0^{+\infty}e^{-st}p_k^{\nu_k}(t) \mathrm dt
                = \left(\prod_{j=1}^{k-1}\lambda_j\right) \frac{s^{\nu_1-1}}{\prod_{j=1}^k
                (s^{\nu_j}+\lambda_j)},
            \end{align}
            where the fractional derivative appearing in \eqref{bir1} is
            in the sense of Dzhrbashyan--Caputo.
        \end{prop}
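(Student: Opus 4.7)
The plan is to mirror the iterative/inductive approach used in the proof of the Theorem in Section \ref{due}, applying the Laplace transform to the Dzherbashyan--Caputo equations and exploiting the simple block structure of the system. The key Laplace identity I will use is
\begin{equation*}
\mathcal{L}\Big\{\tfrac{\mathrm{d}^{\nu_k}}{\mathrm{d}t^{\nu_k}} p_k^{\nu_k}(t)\Big\}(s) = s^{\nu_k}\,\tilde{p}_k^{\nu_k}(s) - s^{\nu_k-1}\,p_k^{\nu_k}(0),
\end{equation*}
valid for $\nu_k\in(0,1]$, together with the convention $p_0^{\nu_0}(t)\equiv 0$ (so that the recursion at $k=1$ has no driving term, consistent with starting the birth process with one progenitor).

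I would first settle the base case $k=1$. With $p_1^{\nu_1}(0)=1$ and no driving term, the Laplace transform of \eqref{bir1} yields $(s^{\nu_1}+\lambda_1)\tilde{p}_1^{\nu_1}(s)=s^{\nu_1-1}$, giving $\tilde{p}_1^{\nu_1}(s)=s^{\nu_1-1}/(s^{\nu_1}+\lambda_1)$, which matches the claimed formula under the convention that the empty product $\prod_{j=1}^{0}\lambda_j$ equals $1$.

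For the inductive step, assume the formula holds for $k-1$. For $k\ge 2$ we have $p_k^{\nu_k}(0)=0$, so the Laplace transform of \eqref{bir1} becomes
\begin{equation*}
(s^{\nu_k}+\lambda_k)\,\tilde{p}_k^{\nu_k}(s) = \lambda_{k-1}\,\tilde{p}_{k-1}^{\nu_{k-1}}(s).
\end{equation*}
Substituting the inductive hypothesis and multiplying through by $\lambda_{k-1}/(s^{\nu_k}+\lambda_k)$ then reshuffles the product to yield
\begin{equation*}
\tilde{p}_k^{\nu_k}(s) = \Bigl(\prod_{j=1}^{k-1}\lambda_j\Bigr)\,\frac{s^{\nu_1-1}}{\prod_{j=1}^k (s^{\nu_j}+\lambda_j)},
\end{equation*}
as desired.

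No step looks genuinely hard: the main subtlety is purely bookkeeping, namely fixing the convention at the left boundary (that $p_0^{\nu_0}\equiv 0$, so the birth chain is fed only by the initial condition at $k=1$) and keeping track of the indices in the product so that $\nu_1$ appears with the shifted exponent $\nu_1-1$ in the numerator, inherited unchanged from the base case through the induction.
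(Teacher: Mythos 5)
Your proposal is correct and follows essentially the same route as the paper: an iterative (inductive) solution in the Laplace domain, using the transform of the Dzherbashyan--Caputo derivative to get $(s^{\nu_k}+\lambda_k)\tilde{p}_k^{\nu_k}(s)=\lambda_{k-1}\tilde{p}_{k-1}^{\nu_{k-1}}(s)$ for $k\ge 2$ from the vanishing initial conditions, with the base case $\tilde{p}_1^{\nu_1}(s)=s^{\nu_1-1}/(s^{\nu_1}+\lambda_1)$. You merely make explicit what the paper leaves implicit (the Caputo Laplace identity, the convention $p_0^{\nu_0}\equiv 0$, and the formal induction behind ``by iterating this procedure''), which is a welcome tightening rather than a different method.
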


            \begin{proof}
                We can solve equation \eqref{bir1} by means of an iterative
                procedure, as follows.
                The equation related to $k=1$
                \begin{align}
                    \label{bir0}
                    \begin{cases}
                        \frac{\mathrm{d}^{\nu_1}}{\mathrm{d}t^{\nu_1}} p_1^{\nu_1}(t)
                        = -\lambda_1 p_1^{\nu_1}(t), & \: t > 0, \: \nu_1 \in (0,1], \\
                        p_1^{\nu_1}(0) = 1,
                    \end{cases}
                \end{align}
                has solution
                $p_1^{\nu_1}(t)= E_{\nu_1,1}(-\lambda t^{\nu_1})$.
                For $k=2$, the equation

                \begin{align}
                    \label{bir2}
                    \begin{cases}
                        \frac{\mathrm{d}^{\nu_2}}{\mathrm{d}t^{\nu_2}} p_2^{\nu_2}(t)
                        = -\lambda_2 p_2^{\nu_2}(t) + \lambda_1 p_1^{\nu_1}(t),
                        & \: t > 0, \: \nu_2 \in (0,1], \\
                        p_2^{\nu_2}(0) = 0,
                    \end{cases}
                \end{align}
                has solution with Laplace transform
                \begin{equation}
                    \tilde{p}_2^{\nu_2} (s)=\int_0^{+\infty}e^{-st}p_2^{\nu_2}(t) \mathrm dt =
                    \frac{\lambda_1 s^{\nu_1-1}}{\lambda_1+s^{\nu_1}}\frac{1}{\lambda_2+s^{\nu_2}}.
                \end{equation}
                whose inverse is given by (see \eqref{conto})
                \begin{equation}
                    p_2^{\nu_2} (t)=\sum_{m=0}^{\infty}(-1)^m \sum_{r=0}^{m}\binom{m}{r}\lambda_1^{r+1}\lambda_2^{m-r}
                    t^{\nu_2(m-r)+\nu_1 r+\nu_2}E_{\nu_1+\nu_2,\nu_2(m-r)+\nu_1
                    r+\nu_2+1}^{m+1}(-\lambda_1 \lambda_2
                    t^{\nu_1+\nu_2}).
                \end{equation}
                By iterating this procedure, we arrive immediately at
                \begin{align}
                    \label{bir3}
                    \tilde{p}_k^{\nu_k} (s) = \int_0^{+\infty}e^{-st}p_k^{\nu_k}(t) \mathrm dt
                    =\left(\prod_{j=1}^{k-1}\lambda_j\right) \frac{s^{\nu_1-1}}{\prod_{j=1}^k
                    (s^{\nu_j}+\lambda_j)},
                \end{align}
                as claimed.
            \end{proof}

        By recalling \eqref{ricos}, we obtain the explicit expression of the state probabilities $p_k^{\nu_k}(t)$,
        $k \ge 1$, $t \ge 0$, as
        \begin{equation}
            p_k^{\nu_k} (t)= E_{\nu_1,1}(-\lambda_1 t^{\nu_1})
            \bigast_{j=1}^k \lambda_j t^{\nu_j-1}E_{\nu_j,\nu_j}(-\lambda_j
            t^{\nu_j}),
        \end{equation}
    where the convolution is in the sense of equation \eqref{praba}.

        We now consider the state dependent linear birth process, denoted
        by $N_{lin}(t)$, $t \ge 0$. This means that we take $\lambda_k =
        \lambda k$ in \eqref{bir1}. We have the following
        \begin{te}
            Let us consider the state dependent linear birth process $N_{lin}(t)$, $t \ge
            0$, governed by
            \begin{equation}
                \label{birl}
                \begin{cases}
                    \frac{\mathrm{d}^{\nu_k}}{\mathrm{d}t^{\nu_k}} p_k^{\nu_k}(t)
                    = -\lambda k p_k^{\nu_k}(t) +\lambda (k-1) p_{k-1}^{\nu_{k-1}}(t), \: k \ge 1, \: t > 0, \: \nu_k \in
                    (0,1],\\
                    p_k^{\nu_k}(0) =
                    \begin{cases}
                        1, \quad k = 1, \\
                        0, \quad k \geq 2,
                    \end{cases}
                \end{cases}
            \end{equation}
            then the following relation holds
            \begin{align}
                \sum_{k=1}^{\infty}k^m\frac{\mathrm{d}^{\nu_k}}{\mathrm{d}t^{\nu_k}} p_k^{\nu_k}(t)
                &= \lambda
                \sum_{j=1}^{m-1}\binom{m}{j}\mathbb{E}N_{lin}^{m-j+1}.
            \end{align}
            \end{te}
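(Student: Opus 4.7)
The plan is to multiply the governing equation \eqref{birl} by $k^m$ and sum over $k\ge 1$, then rearrange the right-hand side so that the various moments of $N_{lin}(t)$ appear. Writing
\begin{align}
\sum_{k=1}^{\infty}k^m\frac{\mathrm{d}^{\nu_k}}{\mathrm{d}t^{\nu_k}} p_k^{\nu_k}(t)
= -\lambda \sum_{k=1}^\infty k^{m+1} p_k^{\nu_k}(t)
+ \lambda \sum_{k=1}^\infty k^m (k-1) p_{k-1}^{\nu_{k-1}}(t), \notag
\end{align}
I would then shift the index $k\mapsto k+1$ in the second sum. Because of the factor $(k-1)$ (which becomes $k$ after the shift) the $k=0$ boundary contribution vanishes and we obtain $\lambda\sum_{k=1}^\infty k(k+1)^m p_k^{\nu_k}(t)$.

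Next I would apply the binomial theorem, $(k+1)^m=\sum_{j=0}^m\binom{m}{j}k^{m-j}$, so that
\begin{align}
k(k+1)^m - k^{m+1}
= \sum_{j=0}^m \binom{m}{j} k^{m-j+1} - k^{m+1}
= \sum_{j=1}^m \binom{m}{j} k^{m-j+1}, \notag
\end{align}
the $j=0$ contribution cancelling exactly against the term coming from the $-\lambda k p_k^{\nu_k}$ part of the birth equation. This is the crucial algebraic step: the highest-order ``moment'' $\mathbb{E}N_{lin}^{m+1}$ cancels, leaving only moments of order at most $m$ times a factor linear in $k$.

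Finally, recognising $\sum_{k=1}^\infty k^{m-j+1} p_k^{\nu_k}(t)=\mathbb{E}N_{lin}^{m-j+1}(t)$, the identity follows. The only genuine difficulty is justifying the interchange of the infinite sum with the fractional derivative and the rearrangement of series; this is routine once one knows that the moments $\mathbb{E}N_{lin}^{m+1}(t)$ are finite for each fixed $t\ge 0$, a property inherited from the explicit form of $p_k^{\nu_k}(t)$ given by the Laplace-inversion formula of the preceding proposition and the standard decay of generalised Mittag--Leffler functions. No deeper obstacle appears; the statement is essentially a formal consequence of the binomial expansion applied to the shifted birth equation.
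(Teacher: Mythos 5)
Your proof is correct and follows essentially the same route as the paper: multiply the governing equation by $k^m$, shift the index in the second sum (the boundary term vanishing thanks to the factor $k-1$), expand $(k+1)^m$ by the binomial theorem, and observe that the $j=0$ term cancels exactly against the $-\lambda k^{m+1}$ contribution. Note that both your computation and the paper's own proof actually produce $\lambda\sum_{j=1}^{m}\binom{m}{j}\mathbb{E}N_{lin}^{m-j+1}$, so the upper limit $m-1$ in the displayed statement is a typo (the case $m=1$, which must give $\lambda\mathbb{E}N_{lin}(t)$ and not an empty sum, confirms this).
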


            \begin{proof}
                In order to find explicit relations for the moments of the
                distribution $N_{lin}(t)$, we multiply both sides of equation \eqref{birl} by $k^m$
                and sum over all the states,
                obtaining
                \begin{align}
                    \sum_{k=1}^{\infty}k^m\frac{\mathrm{d}^{\nu_k}}{\mathrm{d}t^{\nu_k}} p_k^{\nu_k}(t)
                    &= -\lambda \sum_{k=1}^{\infty}k^{m+1}p_k^{\nu_k}(t) +\lambda\sum_{k=1}^{\infty}
                    k^m(k-1) p_{k-1}^{\nu_{k-1}}(t)\\
                    \nonumber &= -\lambda \sum_{k=1}^{\infty} k^{m+1} p_k^{\nu_k}(t)+\lambda \sum_{k=1}^{\infty}
                    k(k+1)^m p_k^{\nu_k}(t)\\
                    \nonumber &= -\lambda \sum_{k=1}^{\infty} k^{m+1} p_k^{\nu_k}(t)
                    +\lambda \sum_{k=1}^{\infty} \sum_{j=0}^{m}\binom{m}{j}k^{m-j+1}
                    p_k^{\nu_k}(t)\\
                    \nonumber & = \lambda \sum_{j=1}^{m}\binom{m}{j}\sum_{k=1}^{\infty}k^{m-j+1}
                    p_k^{\nu_k}(t)= \lambda
                    \sum_{j=1}^{m}\binom{m}{j}\mathbb{E}N_{lin}^{m-j+1},
                \end{align}
            \end{proof}

        \begin{os}
            We can consider in a explicit way the relations involving first and second moments.
            For example, if we multiply \eqref{birl} for $k$ and sum over all the states,
            we obtain that
            \begin{align}
                \sum_{k=1}^{\infty}k\frac{\mathrm{d}^{\nu_k}}{\mathrm{d}t^{\nu_k}} p_k^{\nu_k}(t)
                &= -\lambda \sum_{k=1}^{\infty}k^2 p_k^{\nu_k}(t) +\lambda \sum_{k=1}^{\infty} k(k-1)
                p_{k-1}^{\nu_{k-1}} (t)\\
                \nonumber &= \lambda \sum_{k=1}^{\infty} k p_k^{\nu_k}(t)= \lambda
                \mathbb{E} N_{lin}(t).
            \end{align}
            In the same way, for the second moment, we multiply \eqref{birl} for
            $k^2$, obtaining
            \begin{align}
                \sum_{k=1}^{\infty}k^2\frac{\mathrm{d}^{\nu_k}}{\mathrm{d}t^{\nu_k}} p_k^{\nu_k}(t)
                &= -\lambda \sum_{k=1}^{\infty}k^3 p_k^{\nu_k}(t) +\lambda\sum_{k=1}^{\infty}
                k^2(k-1) p_{k-1}^{\nu_{k-1}}(t)\\
                \nonumber &= \lambda \sum_{k=1}^{\infty} k p_k^{\nu_k}(t)+2\lambda \sum_{k=1}^{\infty}
                k^2 p_k^{\nu_k} (t)\\
                \nonumber &= \lambda \mathbb{E}
                N_{lin}(t)+2\lambda \mathbb{E}(N_{lin})^2(t).
            \end{align}
        \end{os}

        \subsection*{Aknowledgements}
            The authors thank the referee for his (her) appreciation
            of our work and for his (her) useful remarks.\\
            F. Polito has been supported by project AMALFI (Universit\`{a} di Torino/Compagnia di San Paolo).

\end{document}